\thanks{This work has been supported by the ERC Starting Grant 680275 MALIG.}} % \thanks is optional. Insert line breaks with \\
\begin{document}

%%%%%%%%%%%%%%%%%%%%%%%%%%%%%%%%%%%%%%%%%%%%%%%%%%%%%%%%%%%%%%%%%%%
%%                                                               %%
%% No need for \maketitle.                                       %%
%%                                                               %%
%%%%%%%%%%%%%%%%%%%%%%%%%%%%%%%%%%%%%%%%%%%%%%%%%%%%%%%%%%%%%%%%%%%

%%%%%%%%%%%%%%%%%%%%%%%%%%%%%%%%%%%%%%%%%%%%%%%%%%%%%%%%%%%%%%%%%%%
%%                                                               %%
%% Please replace what follows by the body of your article       %%
%% (up to the bibliography):                                     %%
%%                                                               %%
%%%%%%%%%%%%%%%%%%%%%%%%%%%%%%%%%%%%%%%%%%%%%%%%%%%%%%%%%%%%%%%%%%%

\section{Introduction}

Kinetically constrained models (KCMs) are interacting particle systems on graphs, in which each 
vertex (or \emph{site}) of the graph has state (or \emph{spin}) 0 or 1. Each site tries at rate 1 to 
\emph{update} its spin, that is to replace it by 1 with probability $p$ and by 0 with probability $1-p$, 
but the update is accepted only if a certain \emph{constraint} is satisfied, the constraint being of the 
form ``there are enough sites with spin zero around this site''.

KCMs were introduced by physicists to model the liquid-glass transition, which is an important 
open problem in condensed matter physics (see \cite{Ritort_et_al,Garrahan_et_al}). 
In addition to their physical interest, they are also mathematically challenging because the presence 
of the constraints gives them a very different behavior from classical Glauber dynamics 
and renders most of the usual tools ineffective.

A key feature of KCMs is the existence of blocked spin configurations, which 
makes the large-time behavior of KCMs hard to study, especially their relaxation 
to equilibrium when starting out of equilibrium. Indeed, worst case analysis does not help and standard 
coercive inequalities of the log-Sobolev type also fail. Furthermore, the dynamics of KCMs is not attractive, 
so coupling arguments that have proven very useful for other types of 
Glauber dynamics are here inefficient. Because of these difficulties, convergence to equilibrium has been proven 
only in a few models and under particular conditions 
(see \cite{Cancrini_et_al2010,Blondel_et_al2013,Chleboun_et_al2015,Mountford_FA1f}).

There is only one model for which exponentially fast relaxation to equilibrium 
was proven under general conditions (apart from some models on trees that use the same proof): 
the East model, whose base graph is $\mathbb{Z}$ and in which 
an update is accepted when the site at the left has spin 0.
Introduced by physicists in \cite{Jackle_et_al1991}, the East model is the most well-understood 
KCM (see \cite{review_East} for a review).

A natural generalization of the East model to $\mathbb{Z}^d$, introduced in 
\cite{Berthier_et_al2005}, is to accept updates at a site $x$ when $x-e$ has spin 0 
for some $e$ in the canonical basis of $\mathbb{R}^d$. The higher dimension makes this $d$-dimensional East model 
much harder to study than the unidimensional one, and until now 
the relaxation to equilibrium was only proved to be at least stretched exponential (\cite{Chleboun_et_al2015}).

In this article, we prove that the relaxation to equilibrium in the $d$-dimensional East dynamics 
is exponentially fast as soon as the initial configuration is not blocked. This also 
allowed us to prove that the \emph{persistence function}, which is the probability that a given site 
has not yet been updated, decays exponentially with time.

Our results, which are the first to hold for a KCM in dimension greater than 1 and for any $p$, 
may help to understand further the out-of-equilibrium behavior of the $d$-dimensional East model. 
Indeed, such an exponential relaxation result was key to proving ``shape theorems'' 
in one-dimensional models in \cite{Blondel2013,Ganguly_et_al2015,Blondel_et_al2018}.

This paper is organized as follows: we begin by presenting the notations and stating our results in 
Section \ref{sec_notations}, then we prove the exponential relaxation to equilibrium in 
Section \ref{sec_preuve_thm}, and finally we show the exponential decay of the persistence 
function in Section \ref{sec_preuve_cor}.

\section{Notations and results}\label{sec_notations}

We fix $d \in \mathbb{N}^*$. For any $\Lambda \subset \mathbb{Z}^d$, the $d$-dimensional East model 
(in the following, we will just call it ``East model'') in $\Lambda$ is a dynamics on $\{0,1\}^\Lambda$. 
The elements of $\Lambda$ will be called sites and 
the elements of $\{0,1\}^\Lambda$ will be called configurations. 
For any $\eta \in \{0,1\}^\Lambda$, $x \in \Lambda$, the value of $\eta$ at $x$ will be called the 
spin of $\eta$ at $x$ and denoted by $\eta(x)$.

If $f : \{0,1\}^{\Lambda} \mapsto \mathbb{R}$ is a function and $\Lambda' \subset \Lambda$, 
we say the support of $f$ is contained in $\Lambda'$ and we write $\mathrm{supp}(f) \subset \Lambda'$ 
when for any $\eta,\eta' \in \{0,1\}^\Lambda$ coinciding in $\Lambda'$, $f(\eta) = f(\eta')$. 
Moreover, the $\ell^\infty$-norm of $f$, denoted by $\|f\|_\infty$, is 
$\sup_{\eta \in \{0,1\}^{\Lambda}}|f(\eta)|$.

We denote $\{e_1,\dots,e_d\}$ the canonical basis of $\mathbb{R}^d$. 
For any $r \in \mathbb{R}^+$, we denote $\Lambda(r) = 
(\prod_{i=1}^d\{0,\dots,\lfloor r \rfloor \}) \setminus \{(0,\dots,0)\}$.

For any set $A$, $|A|$ will denote the cardinal of $A$.
For $\rho, \rho' \in \mathbb{R}$, we will use the abbreviation 
$\rho \wedge \rho' = \min(\rho, \rho')$.

\medskip

To define the East dynamics in $\Lambda \subset \mathbb{Z}^d$, we begin by fixing $p \in ]0,1[$. 
Informally, the East dynamics can be seen as follows: each site $x$, independently of all others, waits 
for a random time with exponential law of mean 1, then tries to update its spin, that is to replace it by 1 with
probability $p$ and by 0 with probability $1-p$, but the update is accepted if and only if one of the $x-e_i$ is at zero. 
Then $x$ waits for another random time with exponential law, etc. 

More rigorously, independently for each $x \in \Lambda$, we consider a sequence 
$(B_{x,n})_{n \in \mathbb{N}^*}$ of independent random 
variables with Bernoulli law of parameter $p$, and a sequence of times $(t_{x,n})_{n \in \mathbb{N}^*}$ 
such that, denoting $t_{x,0}=0$, the $(t_{x,n}-t_{x,n-1})_{n \in \mathbb{N}^*}$ 
are independent random  variables with exponential law of parameter 1, 
independent from $(B_{x,n})_{n \in \mathbb{N}^*}$. 
The dynamics is continuous-time, denoted by $(\eta_t)_{t \in \mathbb{R}^+}$, and evolves as follows.
For each $x \in \Lambda$, $n \in \mathbb{N}^*$, 
if there exists $i \in \{1,\dots,d\}$ such that $\eta_{t_{x,n}}(x-e_i) = 0$, 
then the spin at $x$ is replaced by $B_{x,n}$ at time $t_{x,n}$. We then say there was an update 
at $x$ at time $t_{x,n}$, or that $x$ was updated at time $t_{x,n}$. 
(If there are sites $x-e_i$, $x \in \Lambda$, $i \in \{1,\dots,d\}$ that are not in $\Lambda$, we need 
to fix the state of their spins in order to run the dynamics.) One can use the arguments 
in Section 4.3 of \cite{Swart2017} to see that this dynamics is well-defined.

For any $\eta \in \{0,1\}^{\Lambda}$, we denote the law of the 
dynamics starting from the configuration $\eta$ by 
$\mathbb{P}_\eta$, and the associated expectation by $\mathbb{E}_\eta$. If the initial 
configuration follows a law $\nu$ on $\{0,1\}^{\Lambda}$, the law and expectation of the 
dynamics will be respectively denoted by $\mathbb{P}_\nu$ and $\mathbb{E}_\nu$.
In the remainder of this work, we will always consider the dynamics on $\mathbb{Z}^d$ unless stated otherwise. 

For any $t \geq 0$ and $\Lambda \subset \mathbb{Z}^d$, we denote 
$\mathcal{F}_{t,\Lambda} = \sigma(t_{x,n},B_{x,n},x \in \Lambda,t_{x,n} \leq t)$ the 
$\sigma$-algebra of the exponential times and Bernoulli variables in the domain $\Lambda$ 
between time 0 and time $t$. We notice that if $\eta_0$ is deterministic, for any $x \in \mathbb{Z}^d$, 
$\eta_t(x)$ depends only on the $t_{x,n},B_{x,n}$ with $t_{x,n} \leq t$ and 
on the state of sites ``below'' $x$: $x-e_1,\dots,x-e_d$, which in turn depends only on the 
$\eta_0(x-e_i)$, $t_{x-e_i,n},B_{x-e_i,n}$ with $t_{x-e_i,n} \leq t$ and on the state of the sites ``below'' 
the $x-e_i$, etc. Therefore $\eta_t(x)$ depends only on $\eta_0$ and on the 
$t_{y,n},B_{y,n}$ with $t_{y,n} \leq t$ and $y \in x + (-\mathbb{N})^d$, 
hence $\eta_t(x)$ is $\mathcal{F}_{t,x + (-\mathbb{N})^d}$-measurable.

\medskip

We will call $\mu$ the product $\mathrm{Bernoulli}(p)$ measure on the configuration space $\{0,1\}^\Lambda$.
The expectation with respect to $\mu$ of a function 
$f : \{0,1\}^{\Lambda} \mapsto \mathbb{R}$, if it exists, will be denoted $\mu(f)$. 
$\mu$ is the equilibrium measure of the dynamics, which can be 
seen using reversibility, since the detailed balance is satisfied. 

We say that a measure $\nu$ on $\{0,1\}^{\mathbb{Z}^d}$ satisfies Condition $(\mathcal{C})$ when 
\[
  (\mathcal{C}) : \exists \, a,A > 0, \forall \, \ell \geq 0, 
  \nu(\forall \, x \in \{-\lfloor \ell\rfloor,\dots,0\}^d, \eta(x)=1) \leq A e^{-a\ell}.
\]

\begin{remark}
  The set of measures satisfying $(\mathcal{C})$ includes 
  \begin{itemize}
  \item the $\delta_\eta$ for any $\eta \in \{0,1\}^{\mathbb{Z}^d}$ such that there exists 
  $x=(x_1,\dots,x_d) \in (-\mathbb{N})^d$ with $\eta(x) = 0$. 
  This is the minimal condition on $\eta$ for which to expect convergence to equilibrium, 
  since if the initial configuration contains only ones, there can be no updates, 
  hence the dynamics is blocked. 
  \item the product $\mathrm{Bernoulli}(p')$ measures with $p' \in [0,1[$, 
  which are particularly relevant for physicists (see \cite{Leonard_et_al2007}). 
  \end{itemize}
\end{remark}

We can now state the main result of the paper, the convergence of the dynamics to equilibrium:

\begin{theorem}\label{thm_convergence}
  For any measure $\nu$ on $\{0,1\}^{\mathbb{Z}^d}$ satisfying 
  $(\mathcal{C})$, there exist constants $\chi = \chi(p) > 0$, 
  $c_1 = c_1(p,\nu) > 0$ and $C_1 = C_1(p,\nu) > 0$ such that, for any $t \geq 0$ 
  and any $f : \{0,1\}^{\mathbb{Z}^d} \mapsto \mathbb{R}$ 
  with $\mathrm{supp}(f) \subset \Lambda(\chi t^{1/d})$, 
  \[
  \int_{\{0,1\}^{\mathbb{Z}^d}}\left| \mathbb{E}_\eta(f(\eta_t)) - \mu(f) \right| 
  \mathrm{d}\nu(\eta) \leq C_1 \|f\|_\infty e^{-c_1 t}.
  \]
\end{theorem}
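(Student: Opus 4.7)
The plan combines three main ingredients: condition $(\mathcal{C})$ to furnish a zero in the initial configuration near the origin, a finite-speed-of-propagation argument confining the relevant dynamics to a finite box around $\mathrm{supp}(f)$, and a finite-volume exponential mixing estimate for the $d$-dimensional East model in that box.

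I would fix $\ell = \kappa t$ for a small constant $\kappa > 0$ and introduce the good event $G_\ell = \{\exists\, x \in \{-\lfloor \ell \rfloor,\dots,0\}^d : \eta_0(x) = 0\}$. By $(\mathcal{C})$, $\nu(G_\ell^c) \le A e^{-a\kappa t}$, so on $G_\ell^c$ the integrand is bounded by $2\|f\|_\infty$ and contributes a term of the desired form. On $G_\ell$, I would first pass from the infinite-volume dynamics to the finite-volume dynamics on $\Lambda^* = \{-\lfloor \ell \rfloor,\dots,\lfloor \chi t^{1/d}\rfloor\}^d$: since $\eta_t(x)$ is $\mathcal{F}_{t,\,x+(-\mathbb{N})^d}$-measurable for each $x \in \mathrm{supp}(f) \subset \Lambda(\chi t^{1/d})$, a backward exploration along the directed graphical construction keeps the relevant sites inside $\Lambda^*$ outside an event of probability $e^{-ct}$. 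Within $\Lambda^*$, I would then use the initial zero guaranteed by $G_\ell$ to build an equilibrated sub-lattice by iterated use of the one-dimensional East model (whose spectral gap is bounded below by a positive constant depending only on $p$, by Aldous--Diaconis): in each of the $d$ canonical directions the seed zero can be transported through a chain of sites at linear cost in the distance, and iterating across all $d$ axes yields in total time $O(t)$ a scaffolding of coupled zeros throughout the box containing $\mathrm{supp}(f)$. A direct coupling of the resulting dynamics with a stationary copy driven by the same graphical construction then gives that the law of $\eta_t\vert_{\mathrm{supp}(f)}$ is at total-variation distance $O(e^{-c_1 t})$ from $\mu\vert_{\mathrm{supp}(f)}$.

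The main obstacle is carrying out this zero-propagation step sharply enough to replace the stretched-exponential bound of \cite{Chleboun_et_al2015} by a genuine exponential: their iterative construction accumulates losses at each scale that compound into a sublinear exponent. The improvement should come from a careful bootstrap exploiting that the $d$-dimensional East dynamics on an already equilibrated mesoscopic region inherits the uniform one-dimensional spectral gap, and that the polynomial $L^d$ dependence on the box volume is exactly absorbed by the scaling $L = \chi t^{1/d}$; balancing this exponential mixing gain against the polynomial cost of building the scaffolding is where I expect most of the technical work to lie.
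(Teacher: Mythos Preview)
Your proposal correctly identifies the overall architecture---use $(\mathcal{C})$ to find an initial zero, push equilibrium information toward $\mathrm{supp}(f)$, and close with a spectral-gap estimate in a box of volume $\Theta(t)$---but it has a genuine gap at exactly the step you yourself flag as the main obstacle. You propose to transport the seed zero through a scaffolding via iterated one-dimensional East moves and hope a ``careful bootstrap'' will upgrade the stretched-exponential bound of \cite{Chleboun_et_al2015} to a true exponential. No concrete mechanism is given, and the scheme you sketch \emph{is} essentially the one in \cite{Chleboun_et_al2015}; the compounding losses there are intrinsic to propagating a single zero over distance $\Theta(t)$ one site at a time, and nothing in your outline circumvents them.

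The paper's new idea is not a sharper bootstrap but a detour. It first shows (Proposition~\ref{lemme_zero_initial}) that \emph{some} site $y$ in a box of side $\Theta(t)$ satisfies $\mathcal{T}_t(y)\geq\frac{1-p}{4}t$, except with exponentially small probability. The argument is combinatorial: if no site in the box stays at zero throughout $[0,t/2]$, one can grow an oriented path of \emph{updated} sites from the initial zero to the far boundary (Lemma~\ref{lemme_chemins_orientes}); this path crosses $\Theta(t)$ diagonal hyperplanes $H_k=\{x_1+\cdots+x_d=-k\}$. Conditioning on a filtration $\mathcal{F}_k$ containing everything strictly below $H_k$ together with the clock times on $H_k$ up to $t/2$, any updated site on $H_k$ has already been resampled, and a short Markov-inequality computation gives that it fails to accumulate $\frac{1-p}{4}t$ of zero-time with conditional probability at most $\tfrac{2p}{1+p}<1$. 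The orientation of the model makes these bounds multiply across the $\Theta(t)$ values of $k$, yielding the exponential. Only \emph{after} this is secured does the paper invoke Lemma~4.9 of \cite{Chleboun_et_al2015}, and merely $d$ times (once per coordinate), to transfer the long-zero-time from $y$ to the origin (Lemma~\ref{lemme_FK}); the cost $(p\wedge(1-p))^{-|y_i|}$ over each segment of length $O(\alpha t)$ is beaten by the gain $e^{-c\,\delta^{i-1}\frac{1-p}{4}t}$ for $\alpha$ small, precisely because $\mathcal{T}_t(y)=\Omega(t)$ has already been established. The closing step is likewise not a coupling with a stationary copy but the identity around Equation~(4.2) of \cite{Chleboun_et_al2015}, which produces a factor $e^{-\lambda\,\mathcal{T}_t(0)}$ at the price $(p\wedge(1-p))^{-|\Lambda|}$; this is the balance you anticipated, but it only goes through because $\mathcal{T}_t(0)=\Omega(t)$ is already in hand. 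As a minor aside, your finite-speed localization step is unnecessary: by orientation, $\eta_t$ on $\Lambda(\chi t^{1/d})$ is deterministically $\mathcal{F}_{t,\Lambda(\chi t^{1/d})+(-\mathbb{N})^d}$-measurable, so no error term arises there.
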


\begin{remark}
  With only minor modifications in the proof, one can also show exponential 
  convergence of the quantity $\int_{\{0,1\}^{\mathbb{Z}^d}}\left| \mathbb{E}_\eta(f(\eta_t)) - \mu(f) \right|^\gamma 
  \mathrm{d}\nu(\eta)$ for any $\gamma > 0$. 
\end{remark}

Another quantity of interest is the persistence function. If $\nu$ is the law of the 
initial configuration and $x \in \mathbb{Z}^d$, the corresponding persistence function can be defined as 
$F_{\nu,x}(t) = \mathbb{P}_\nu(\tau_x > t)$ for any $t \geq 0$, where $\tau_x$ is the 
first time there is an update at $x$. The persistence function is a
``measure of the mobility of the system'': the more the spin at $x$ can change, 
the faster it will decrease. Theorem \ref{thm_convergence} allows to prove exponential decay of the 
persistence function:

\begin{corollary}\label{cor_persistance}
  For any measure $\nu$ on $\{0,1\}^{\mathbb{Z}^d}$ satisfying 
  $(\mathcal{C})$, there exist constants $\chi = \chi(p) > 0$, $c_2 = c_2(p,\nu) > 0$ and 
  $C_2 = C_2(p,\nu) > 0$ such that for any $t \geq 0$ and any 
  $x\in\Lambda(\chi t^{1/d})$, $F_{\nu,x}(t) \leq C_2 e^{-c_2 t}$.
\end{corollary}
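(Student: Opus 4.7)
The plan is to express the persistence function as the Laplace transform of the occupation time of the East constraint at $x$, and then deduce an exponential concentration estimate for that time from Theorem~\ref{thm_convergence}. Setting $C_s = \mathbf{1}_{\exists\, i \in \{1,\dots,d\}: \eta_s(x - e_i) = 0}$, I first note that the East constraint at any site never involves the site itself, so the dynamics on $x + (-\mathbb{N})^d \setminus \{x\}$ is autonomous and independent of the Poisson clock and Bernoulli variables at $x$. Integrating out the clock at $x$ by a Poisson thinning argument yields
\[
F_{\nu,x}(t) \;=\; \mathbb{E}_\nu\!\left[\exp\!\left(-\int_0^t C_s\,ds\right)\right].
\]
Fixing $\alpha \in (0, 1 - p^d)$ and using the trivial inequality $e^{-y} \leq e^{-\alpha t} + \mathbf{1}_{y \leq \alpha t}$, the corollary reduces to proving a large-deviation estimate
\[
\mathbb{P}_\nu\!\left(\int_0^t C_s\,ds \leq \alpha t\right) \;\leq\; C e^{-c t}.
\]

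To establish this bound I partition $[0,t]$ into $N = \lfloor t/T \rfloor$ blocks of fixed length $T$ and set $Y_k = \int_{kT}^{(k+1)T} C_s\,ds$. The single-site test function $\eta \mapsto \mathbf{1}_{\eta(x - e_i) = 0}$ (for some index $i$ with $x_i \geq 1$, which exists because $x \neq 0$) has support $\{x - e_i\} \subset \Lambda(\chi t^{1/d})$, so Theorem~\ref{thm_convergence} gives $\mathbb{P}_\nu(C_s = 1) \to 1 - p^d$ exponentially fast. Combined with an $L^2$ estimate for $Y_0$ under $\mu$ (again obtained from Theorem~\ref{thm_convergence} applied to two-point correlations in time), this yields $\mathbb{P}_\mu(Y_0 \leq \alpha T) \leq 1/2$ for $T$ large enough. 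After approximating the function $\eta \mapsto \mathbb{P}_\eta(Y_0 \leq \alpha T)$ by a bounded-support function via a finite-speed-of-propagation estimate for the East dynamics over the fixed time $T$, Theorem~\ref{thm_convergence} then transfers this bound to $\mathbb{E}_\nu\!\left[\mathbb{P}(Y_k \leq \alpha T \mid \mathcal{F}_{kT,\, x + (-\mathbb{N})^d})\right] \leq 1/2 + o(1)$ for $k$ large enough. A Chernoff-type iteration over the $N$ blocks via the Markov property then produces the required exponential bound.

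The main obstacle is the truncation just described: the function $\eta \mapsto \mathbb{P}_\eta(Y_0 \leq \alpha T)$ depends a priori on every site in $x + (-\mathbb{N})^d$, whereas Theorem~\ref{thm_convergence} only handles functions supported in a box of size $\chi t^{1/d}$. The key technical point is therefore to approximate it in $\|\cdot\|_\infty$ by a function supported on a bounded box around $x$, by a quantitative finite-speed-of-propagation estimate for the East dynamics over the finite time window $T$. Because the Poisson clocks have exponential tails, the approximation error decays exponentially with the box size and is thus dominated by the main exponential bound.
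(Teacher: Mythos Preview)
Your approach is far more elaborate than the paper's and has a real gap in the Chernoff step. The paper's argument is essentially one line: for a fixed initial configuration $\eta$, splitting on $\{\tau_x\le t\}$ gives
\[
\mathbb{E}_\eta(\eta_t(x)) \;=\; p\,\mathbb{P}_\eta(\tau_x\le t)+\eta(x)\,\mathbb{P}_\eta(\tau_x>t),
\]
since after the first update the spin at $x$ is a fresh $\mathrm{Bernoulli}(p)$ variable. Hence $|\mathbb{E}_\eta(\eta_t(x))-p|=|\eta(x)-p|\,\mathbb{P}_\eta(\tau_x>t)\ge(p\wedge(1-p))\,\mathbb{P}_\eta(\tau_x>t)$. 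Integrating over $\nu$ and applying Theorem~\ref{thm_convergence} to the single-site function $f(\eta)=\eta(x)$, whose support $\{x\}$ lies in $\Lambda(\chi t^{1/d})$, immediately gives $F_{\nu,x}(t)\le (p\wedge(1-p))^{-1}C_1 e^{-c_1 t}$.

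The gap in your route is the block iteration. What you actually extract from Theorem~\ref{thm_convergence} is only
\[
\mathbb{E}_\nu\!\bigl[\mathbb{P}(Y_k\le\alpha T\mid\mathcal{F}_{kT})\bigr]=\mathbb{P}_\nu(Y_k\le\alpha T)\le \tfrac12+o(1),
\]
an \emph{unconditional} bound. A Chernoff iteration over the $N$ blocks needs an almost-sure (or uniform high-probability) bound on the conditional probability $g(\eta_{kT})=\mathbb{P}(Y_k\le\alpha T\mid\mathcal{F}_{kT})$, so that the product $\prod_k g(\eta_{kT})$ can be dominated by $\rho^N$; an $L^1$ bound on $g(\eta_{kT})$ cannot be pulled through that product. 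Theorem~\ref{thm_convergence} controls $\int|\mathbb{E}_\eta(f(\eta_s))-\mu(f)|\,d\nu(\eta)$, an $L^1(\nu)$ quantity in the \emph{initial} configuration, and gives no pointwise control of $g$ evaluated at the \emph{evolved} configuration $\eta_{kT}$. Repairing this would require either reaching into the proof of Theorem~\ref{thm_convergence} to extract the pointwise estimate valid on $\{\mathcal{N}(\eta)\}$, or a separate argument that $\eta_{kT}$ almost surely has a zero in $\{-\lfloor\alpha T\rfloor,\dots,0\}^d$ with uniformly controlled constants---either way, substantially more than a corollary of Theorem~\ref{thm_convergence}. (A minor additional issue: when $x=e_i$ the test site $x-e_i=0$ is not in $\Lambda(\chi t^{1/d})$, so your support claim fails there.)
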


\begin{remark}
  The decay of the persistence function can not be faster than exponential, because 
  $\tau_x \geq t_{x,1}$, thus $F_{\nu,x}(t) \geq \mathbb{P}_\nu(t_{x,1} \geq t) = e^{-t}$. 
  Moreover, since the spin of a site $x$ will remain in its initial 
  state until $\tau_x$, the convergence to equilibrium can not be faster than exponential. 
  Consequently, the exponential speed is the actual speed.
\end{remark}

\begin{remark}
  In Theorem \ref{thm_convergence} and Corollary \ref{cor_persistance}, one could replace 
  $\Lambda(\chi t^{1/d})$ with any box of the form $(\prod_{i=1}^d\{0,\dots,a_i \}) \setminus \{(0,\dots,0)\}$, 
  $a_1,\dots,a_d \in \mathbb{N}$, $\prod_{i=1}^d(a_i+1)-1 \leq 2^d\chi^d t$.
\end{remark}

\section{Proof of Theorem \ref{thm_convergence}}\label{sec_preuve_thm}

The proof of the theorem can be divided in three steps. Firstly, we use a novel 
argument to find a site of $(-\mathbb{N})^d$ at distance $O(t)$ 
from the origin that remains at zero for a total time 
$\Omega(t)$ between time 0 and time $t$ (Section \ref{subsection_zero_initial}). 
Afterwards, we use sequentially a result of \cite{Chleboun_et_al2015} to prove that the origin also 
stays at zero for a time $\Omega(t)$ (Section \ref{subsection_origine_azero}). 
Finally, we end the proof of the theorem with the help of a formula derived in \cite{Chleboun_et_al2015}.

\subsection{Finding a site that stays at zero for a time $\Omega(t)$}\label{subsection_zero_initial}

For any $t \geq 0$ and $\alpha > 0$, 
we denote $D = D(t,\alpha) = \{-\lfloor 2d \alpha t \rfloor,\dots,0\}^d$. 
For any $x \in \mathbb{Z}^d$,  we denote $\mathcal{T}_t(x) = \int_0^t 
\mathbb{1}_{\{\eta_s(x)=0\}}\mathrm{d}s$ the time that $x$ spends at zero 
between time 0 and time $t$. We also define 
$\mathcal{G}=\{\exists x \in D \,|\, \mathcal{T}_t(x) \geq \frac{1-p}{4}t\}$. 
We then have 

\begin{proposition}\label{lemme_zero_initial}
  For any $\alpha > 0$, there exist constants 
  $c_3 = c_3(p,\alpha) > 0$ and $C_3 = C_3(p,\alpha) > 0$ such that for any $t \geq 0$, for any 
  $\eta \in \{0,1\}^{\mathbb{Z}^d}$ such that there exists 
  $x \in \{-\lfloor \alpha t\rfloor,\dots,0\}^d$ with 
  $\eta(x)=0$, $\mathbb{P}_\eta(\mathcal{G}^c) \leq C_3 e^{-c_3 t}$.
\end{proposition}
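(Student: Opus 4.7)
The plan is to construct a ``chain'' of sites rooted at $x$ that traces the backward propagation of zeros under the East constraint. Set $x_0 = x$ and $\tau_0 = 0$; given $x_k$ with $\eta_{\tau_k}(x_k) = 0$, let $\tau_{k+1}$ be the first time $\geq \tau_k$ at which the spin of $x_k$ flips from 0 to 1. Such a flip requires the East constraint at $x_k$ to be satisfied at $\tau_{k+1}$, so there exists some $i_k \in \{1,\dots,d\}$ with $\eta_{\tau_{k+1}}(x_k - e_{i_k}) = 0$; set $x_{k+1} = x_k - e_{i_k}$. The key deterministic observation is that $x_k$ stays at 0 throughout $[\tau_k, \min(\tau_{k+1}, t))$, so the contiguous zero-time $\min(\tau_{k+1}, t) - \tau_k$ lower-bounds $\mathcal{T}_t(x_k)$.

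If this contiguous zero-time is at least $(1-p)t/4$ at some step $k$ with $x_k \in D$, then $\mathcal{G}$ occurs. Otherwise, a simple counting argument gives two termination scenarios for the chain. Since each step shifts $x_k$ by one lattice unit in some $-e_i$ direction, the chain stays in $D$ for at most $(2d-1)\alpha t$ steps. In ``scenario (i)'' the chain leaves $D$, which forces at least $(2d-1)\alpha t$ consecutive waits $\tau_{k+1}-\tau_k < (1-p)t/4$ with $\tau_{(2d-1)\alpha t} \leq t$. In ``scenario (ii)'' the chain terminates inside $D$ with $\tau_{K+1}>t$ at some step $K$, where the failure condition $t-\tau_K<(1-p)t/4$ combined with $\tau_K<K(1-p)t/4$ forces $K>(3+p)/(1-p)$.

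The probabilistic core relies on the fact that the chain visits pairwise distinct sites, so the clocks $(t_{x_k,\cdot}, B_{x_k,\cdot})_k$ are mutually independent. Each waiting time $\tau_{k+1}-\tau_k$ stochastically dominates an independent $\mathrm{Exp}(p)$ random variable, namely the time to the next $B_{x_k,n}=1$ clock at $x_k$. Scenario (i) is then bounded by the probability that a sum of at least $(2d-1)\alpha t$ independent $\mathrm{Exp}(p)$ variables is $\leq t$, controlled by a Chernoff estimate on the Gamma distribution; scenario (ii) combines the waiting-time bound with one further independent $\mathrm{Exp}(p)$ clock at the final site $x_K$, whose residual value must exceed $t-\tau_K$. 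Summing these two contributions gives $\mathbb{P}_\eta(\mathcal{G}^c) \leq C_3 e^{-c_3 t}$.

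The main obstacle is to make the probabilistic bound work uniformly over the random chain length $K$ and for all $\alpha > 0$: a careful union bound over admissible $K$ (ranging from $\lceil(3+p)/(1-p)\rceil+1$ up to the deterministic cap imposed by $D$) has to be combined with the ``many short waits'' bound. Handling the regime of small $\alpha$, where the Gamma-based Chernoff bound degenerates, is the delicate part, and likely requires exploiting both termination scenarios simultaneously and carefully tracking the residual time $t-\tau_K$ in the last site's independent exponential clock.
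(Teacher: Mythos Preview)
Your chain construction and the lower bound $\tau_{k+1}-\tau_k \geq W_k$ with $W_k$ an $\mathrm{Exp}(p)$ waiting time are sound (the mutual independence of the $W_k$ should be argued via the strong Markov property rather than mere distinctness of the random sites $x_k$, but that is repairable). The real gap is scenario~(ii). First, your claim that $\tau_{K+1}>t$ forces the residual $\mathrm{Exp}(p)$ clock at $x_K$ to exceed $t-\tau_K$ is false: $\tau_{K+1}$ is the first time the East constraint is satisfied \emph{and} $B=1$ simultaneously, so if the constraint at $x_K$ is rarely met, $\tau_{K+1}$ can vastly exceed $\tau_K+W_K$. Second, even granting that claim, you only know $t-\tau_K<(1-p)t/4$, an \emph{upper} bound on the residual time, which gives no useful upper bound on $\mathbb{P}(W_K>t-\tau_K)=e^{-p(t-\tau_K)}$. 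Third and most fundamentally, scenario~(ii) only forces $K>(3+p)/(1-p)=O(1)$; a chain of bounded length with $\tau_K\in((3+p)t/4,t]$ and $\tau_{K+1}>t$ is a perfectly typical event, not an exponentially rare one. Your implication $\mathcal{G}^c\Rightarrow\text{(i) or (ii)}$ therefore extracts only $O(1)$ constraints in case~(ii), which cannot yield decay $e^{-c_3 t}$. The small-$\alpha$ difficulty you flag in scenario~(i) is a symptom of the same issue: the waiting-time chain does not, by itself, manufacture $\Theta(t)$ independent rare events.

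The paper fixes this by extracting $\Theta(t)$ constraints regardless of $\alpha$. It replaces your flip-time chain by the coarser question ``was the site updated in $[0,t/2]$?'' and shows deterministically that on $\mathcal{G}^c$ there is an oriented path of updated sites from $x$ all the way to the boundary $D\setminus D'$; such a path crosses $\Theta(\alpha t)$ diagonal hyperplanes $H_k=\{y\in D: y_1+\cdots+y_d=-k\}$. It then peels off the hyperplanes one at a time via a carefully chosen filtration $\mathcal{F}_k$: conditionally on $\mathcal{F}_k$, if $y\in H_k$ was updated by time $t/2$ then $\mathbb{P}(\eta_s(y)=1\mid\mathcal{F}_k)=p$ for every $s\in[t/2,t]$, and a Markov inequality on $\int_{t/2}^t \mathbb{1}_{\{\eta_s(y)=1\}}\,\mathrm{d}s$ gives $\mathbb{P}\bigl(\mathcal{T}_t(y)<\tfrac{1-p}{4}t\,\bigm|\,\mathcal{F}_k\bigr)\leq 2p/(1+p)<1$. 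Taking the product over the $\Theta(\alpha t)$ hyperplanes yields the exponential bound, and the argument works uniformly for every $\alpha>0$.
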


\begin{figure}
  \begin{center}
   \begin{tikzpicture}[scale=0.25]
    % Z^2
    \draw [ultra thin, color=gray!40] (-18,-18) grid (2,2) ;
    \draw[->,>=latex] (-18,0)--(2,0) ;
    \draw[->,>=latex] (0,-18)--(0,2) ;
    \draw (0,0) node [above right] {$0$} ; 
    \draw (-16,0) node{$\shortmid$} node [above] {$-\lfloor \beta  t \rfloor$} ;
    % boxes
    \draw[thick] (0.5,-4.5)--(-4.5,-4.5)--(-4.5,0.5)--(0.5,0.5)--cycle ;
    \draw (0.5,-4) node [right] {$\{-\lfloor \alpha t\rfloor,\dots,0\}^d$};
    \draw[thick] (0.5,-15.5)--(-15.5,-15.5)--(-15.5,0.5)--(0.5,0.5)--cycle ;
    \draw (0.5,-15) node [above right] {$D'$};
    \draw[thick] (0.5,-16.5)--(-16.5,-16.5)--(-16.5,0.5)--(0.5,0.5)--cycle ;
    \draw (0.5,-16) node [right] {$D$};
    % hyperplane 
    \draw (-12,0.5)--(0.5,-12)--(0,-12.5)--(-12.5,0)--cycle ;
    \draw (-6,-6) node [below left] {$H_k$} ;
    % x 
    \draw (-3,-2) node{$\ast$} node [right] {$x$} ;
    % oriented path
    \draw[->,>=latex,very thick] (-3,-2)--(-3,-3)--(-4,-3)--(-4,-6)--(-4,-10)--(-6,-10)--(-6,-11)--(-8,-11) ;
    \draw[->,>=latex,very thick] (-6,-11)--(-8,-11)--(-8,-12)--(-9,-12)--(-9,-14)--(-10,-14)--(-10,-16) ;
    \draw[->,>=latex,very thick] (-4,-5)--(-4,-6) ;
    \draw[->,>=latex,very thick] (-7,-11)--(-8,-11) ;
   \end{tikzpicture}
  \end{center}
 
  \caption{The setting of the proof of Proposition \ref{lemme_zero_initial} for $d=2$. 
  The thick squares represent $\{-\lfloor \alpha t\rfloor,\dots,0\}^d$, $D'$ and $D$ (from smallest to largest). 
  An oriented path joining $x \in \{-\lfloor \alpha t\rfloor,\dots,0\}^d$ to $D \setminus D'$ (thick arrows) 
  intersects any $H_k$ with $k \in \{d\lfloor \alpha t \rfloor,\dots,\lfloor \beta  t \rfloor\}$ 
  (sloped rectangle).}
  \label{figure}
\end{figure}
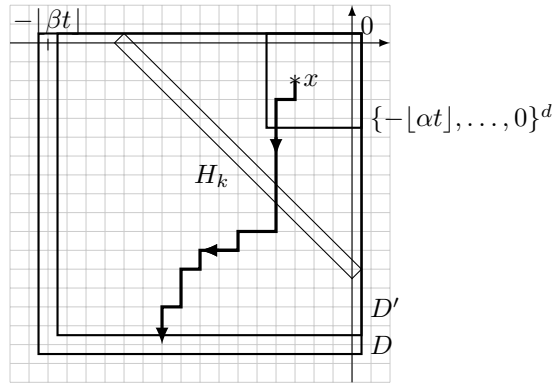

\begin{proof}
  The setting of the proof is illustrated in Figure \ref{figure}. 
  We set $\alpha > 0$. It is enough to prove the proposition for $t \geq 1/(2d\alpha-\alpha)$, 
  so we fix $t \geq 1/(2d\alpha-\alpha)$. Let $\eta \in \{0,1\}^{\mathbb{Z}^d}$ with 
  $x \in \{-\lfloor \alpha t\rfloor,\dots,0\}^d$ such that $\eta(x)=0$ be the initial configuration.
  We define $E = \{y \in D \,|$ there was an 
  update at $y$ in the time interval $[0,t/2]\}$.
  Moreover, an \emph{oriented path} will be a sequence 
  of sites $(x^{(1)},\dots,x^{(n)})$ with $n \in \mathbb{N}^*$ such that 
  for any $k \in \{1,\dots,n-1\}$, there exists $i\in\{1,\dots,d\}$ with $x^{(k+1)} = x^{(k)}-e_i$. 
  Furthermore, writing $\beta  = 2d\alpha$, we can define $D' = \{-\lfloor \beta  t \rfloor+1,\dots,0\}^d$. 
  Since $t \geq 1/(2d\alpha-\alpha)$, $2d\alpha t -1 \geq \alpha t$, 
  so $-\lfloor \beta  t \rfloor+1 \leq -\lfloor \alpha t\rfloor$, thus $x \in D'$.

  The proof of Proposition \ref{lemme_zero_initial} relies on the following 
  auxiliary lemma, whose proof will be postponed until 
  after the proof of Proposition \ref{lemme_zero_initial}:

  \begin{lemma}\label{lemme_chemins_orientes}
    If no site in $D$ stays at zero during the time interval 
    $[0,t/2]$, then there exists an oriented path in $E$ joining $x$ to $D \setminus D'$.
  \end{lemma}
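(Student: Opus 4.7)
The plan is to construct the oriented path $x = x^{(1)}, x^{(2)}, \dots, x^{(n)}$ inductively by following the East constraint backwards in space, using the hypothesis that no site of $D$ stays at zero on $[0,t/2]$ to force each vertex into $E$.

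First I would observe that $x \in E$: by the setup of Proposition \ref{lemme_zero_initial} we have $\eta_0(x) = 0$, so since $x \in D$ is not allowed to remain at $0$ throughout $[0,t/2]$, its spin must flip on this interval, which requires an update at $x$ in $[0,t/2]$. For the inductive step, suppose $x^{(1)}, \dots, x^{(k)}$ have been constructed in $E$ and $x^{(k)} \in D'$ (otherwise the construction already terminates). Since $x^{(k)} \in E$ there is an update time $s_k \in [0,t/2]$ at $x^{(k)}$, and the East constraint furnishes $i \in \{1,\dots,d\}$ with $\eta_{s_k}(x^{(k)}-e_i) = 0$. Setting $x^{(k+1)} := x^{(k)}-e_i$ keeps us inside $D$ because $x^{(k)} \in D'$ means every coordinate of $x^{(k)}$ lies in $\{-\lfloor \beta t\rfloor + 1, \dots, 0\}$. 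The fact that $x^{(k+1)}$ takes value $0$ at time $s_k$ while, by the lemma's hypothesis, not remaining at $0$ on the whole of $[0,t/2]$, then forces at least one update at $x^{(k+1)}$ somewhere in $[0,t/2]$ (to produce the required excursion to $1$), so $x^{(k+1)} \in E$ and the induction proceeds.

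Termination is essentially automatic: every coordinate of $x^{(k)}$ is non-positive and exactly one coordinate strictly decreases at each step, so the path remains in $D'$ only until some coordinate reaches $-\lfloor \beta t \rfloor$, at which moment $x^{(k)}$ has landed in $D \setminus D'$ and the construction stops. I do not foresee a real obstacle here; the only subtlety is ensuring that the argument ``$x^{(k+1)}$ must be updated on $[0,t/2]$'' works uniformly regardless of the value of $\eta_0(x^{(k+1)})$, which is precisely what the hypothesis ``no site of $D$ stays at zero on $[0,t/2]$'' delivers, via a case distinction on whether the forced visit of $x^{(k+1)}$ to state $1$ happens before or after $s_k$.
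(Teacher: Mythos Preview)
Your proposal is correct and follows essentially the same route as the paper: initialize at $x$ (which lies in $E$ since it starts at $0$ but cannot remain there), then at each step use the East constraint at the last update of the current endpoint to pick a neighbor in $D$ that was at $0$ at some moment of $[0,t/2]$, and invoke the hypothesis to force that neighbor into $E$; termination is by the trivial bound on path length. The only cosmetic difference is that the paper's case split is on whether $\eta_0(x^{(k+1)})=0$ or $1$, whereas you split on whether the excursion to state $1$ occurs before or after $s_k$; these are equivalent ways of extracting an update in $[0,t/2]$.
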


  This auxiliary lemma implies that we either get a site satisfying $\mathcal{G}$, 
  or a path of $\Omega(t)$ sites that were updated before time $t/2$. In the latter case, 
  the orientation of the model allows us to use a conditioning which yields that the 
  probabilty that none of the sites of the path stays at zero for a time $\frac{1-p}{4}t$ 
  is the product of the probabilities for each of the sites not to stay at zero 
  for a time $\frac{1-p}{4}t$, and we can prove that this probabilty is strictly smaller than one. We now make this
  argument precise to prove Lemma \ref{lemme_zero_initial}.

  For any $k \in \{0,\dots,d\lfloor \beta  t \rfloor\}$, we define the ``diagonal hyperplane'' 
  $H_k = \{(x_1,\dots,x_d) \in D \,|\,x_1+\cdots+x_d = -k\}$ (see Figure \ref{figure}) and 
  we denote $\mathcal{U}_k = \{H_k \cap E \neq \emptyset\}$.
  If $\mathcal{G}^c$ occurs, no site of $D$ 
  can stay at zero during the whole time interval $[0,t/2]$, hence by Lemma 
  \ref{lemme_chemins_orientes} there exists an oriented path in $E$ joining $x$ to 
  $D \setminus D'$. Since $x\in\bigcup_{k=0}^{d\lfloor \alpha t \rfloor}H_k$ and 
  $D \setminus D' \subset \bigcup_{k=\lfloor \beta  t \rfloor}^{d\lfloor \beta  t \rfloor}H_k$, 
  $E$ intersects all the $H_k$ for $k \in \{d\lfloor \alpha t \rfloor,\dots,\lfloor \beta  t \rfloor\}$. 
  This implies $\mathcal{G}^c \subset \bigcap_{k = d\lfloor \alpha t \rfloor}^{\lfloor \beta  t \rfloor} \mathcal{U}_k$.
  Furthermore, for any $k \in \{0,\dots,d\lfloor \beta  t \rfloor\}$, we may define 
  $\mathcal{G}_{k} = \{\exists x \in H_k, \mathcal{T}_t(x) \geq \frac{1-p}{4}t\}$, then 
  $\mathcal{G}^c \subset \bigcap_{k=d\lfloor \alpha t \rfloor}^{\lfloor \beta  t \rfloor}\mathcal{G}_{k}^c$.
  We deduce $\mathcal{G}^c \subset \bigcap_{k=d\lfloor \alpha t \rfloor}
  ^{\lfloor \beta  t \rfloor} (\mathcal{U}_k \cap \mathcal{G}_{k}^c)$, so 
  \[
    \mathbb{P}_\eta (\mathcal{G}^c) \leq 
    \mathbb{E}_\eta \left(\prod_{k=d\lfloor \alpha t \rfloor}^{\lfloor \beta  t \rfloor} 
    (\mathbb{1}_{\mathcal{U}_k} \mathbb{1}_{\mathcal{G}_{k}^c})\right). 
  \]

  For all $k \in \{d\lfloor \alpha t \rfloor,\dots,\lfloor \beta  t \rfloor\}$, we define a $\sigma$-algebra 
  $\mathcal{F}_k = \sigma(\mathcal{F}_{t,\Lambda}, 
  \sigma(t_{x,n},x \in H_k,t_{x,n} \leq t/2))$, where 
  $\Lambda = \{(x_1,\dots,x_d) \in (-\mathbb{N})^d \,|\, x_1+\cdots+x_d < -k\}$.
  For any $\ell \in \{d\lfloor \alpha t \rfloor,\dots,\lfloor \beta  t \rfloor\}$ with $\ell > k$, 
  one can see that everything that happens at the sites in $H_{\ell}$ between 
  times 0 and $t$ is $\mathcal{F}_k$-measurable, 
  thus $\mathcal{U}_{\ell}$ and $\mathcal{G}_{\ell}^c$ are $\mathcal{F}_k$-measurable. 
  Moreover, for any $x \in H_k$, the spins of the $x-e_i$, $i \in \{1,\dots,d\}$ in the time interval 
  $[0,t/2]$ are $\mathcal{F}_k$-measurable and 
  the $t_{x,n} \leq t/2$ are also $\mathcal{F}_k$-measurable. Therefore the event 
  $\{$there was an update at $x$ between time 0 and time $t/2\}$ is 
  $\mathcal{F}_k$-measurable, hence $\mathcal{U}_k$ is $\mathcal{F}_k$-measurable. Consequently, 
  \[
    \mathbb{P}_\eta (\mathcal{G}^c) \leq 
    \mathbb{E}_\eta \left(\left.\mathbb{E}_\eta \left(\mathbb{1}_{\mathcal{G}_{d\lfloor \alpha t \rfloor}^c}
    \right|\mathcal{F}_{d\lfloor \alpha t \rfloor}\right)
    \mathbb{1}_{\mathcal{U}_{d\lfloor \alpha t \rfloor}}
    \prod_{k=d\lfloor \alpha t \rfloor+1}^{\lfloor \beta  t \rfloor} (\mathbb{1}_{\mathcal{U}_k} 
    \mathbb{1}_{\mathcal{G}_{k}^c}) \right).
  \]
  Therefore, if we can find a constant $c_3' = c_3'(p) > 0$ such that 
  \begin{equation}\label{eq_lzi_principale}
    \forall k \in \{d\lfloor \alpha t \rfloor,\dots,\lfloor \beta  t \rfloor\}, \mathbb{1}_{\mathcal{U}_k}
    \mathbb{E}_\eta \left(\mathbb{1}_{\mathcal{G}_{k}^c}|\mathcal{F}_{k}\right) \leq e^{-c_3'}
  \end{equation}
  then we have 
  \[
    \mathbb{P}_\eta (\mathcal{G}^c) \leq e^{-c_3'} 
    \mathbb{E}_\eta \left(\prod_{k=d\lfloor \alpha t \rfloor+1}^{\lfloor \beta  t \rfloor} 
    (\mathbb{1}_{\mathcal{U}_k} \mathbb{1}_{\mathcal{G}_{k}^c})\right),
  \]
  so by a simple induction $\mathbb{P}_\eta (\mathcal{G}^c) 
  \leq e^{-c_3'(\lfloor \beta  t \rfloor+1-d\lfloor \alpha t \rfloor)} 
  \leq e^{-c_3'(\beta  t-d\alpha t)} =  e^{-c_3'd\alpha t}$, which is Proposition \ref{lemme_zero_initial}.

  Consequently, we only need to prove (\ref{eq_lzi_principale}).
  Let $k \in \{d\lfloor \alpha t \rfloor,\dots,\lfloor \beta  t \rfloor\}$. For any $x \in H_k$, 
  if the state of the $x-e_i$, $i \in \{1,\dots,d\}$ between time 0 and time $t$ is known, 
  and if the $t_{x,n} \leq t/2$ are also known, the state of $x$ between time 
  0 and time $t$ depends only on the $t/2 < t_{x,n} \leq t$ and 
  on the $B_{x,n}$ such that $t_{x,n} \leq t$. Therefore, conditionally on 
  $\mathcal{F}_k$, the state of $x$ between time 0 and time $t$ depends only on 
  $\{t/2 < t_{x,n} \leq t\}\cup \{B_{x,n}\,|\,t_{x,n} \leq t\}$. Moreover, 
  these sets for $x \in H_k$ are mutually independent conditionally on $\mathcal{F}_k$, hence 
  the states of the $x \in H_k$ between time 0 and time $t$ are mutually 
  independent conditionally on $\mathcal{F}_k$, which implies 
  \begin{equation}\label{eq_lzi_sepsites}
   \mathbb{1}_{\mathcal{U}_k}\mathbb{E}_\eta \left(\mathbb{1}_{\mathcal{G}_{k}^c}|\mathcal{F}_{k}\right)
    = \mathbb{1}_{\mathcal{U}_k}\!\prod_{x \in H_k}\!\mathbb{P}_\eta\!\left(\left.\!
    \mathcal{T}_t(x) < \frac{1-p}{4}t\right|\mathcal{F}_{k}\!\right)
    \leq \mathbb{1}_{\mathcal{U}_k}\!\prod_{x \in H_k \cap E}\!\mathbb{P}_\eta\!\left(\left.\!
    \mathcal{T}_t(x) < \frac{1-p}{4}t\right|\mathcal{F}_{k}\!\right)\!.
  \end{equation}

  In addition, for $x \in H_k \cap E$, we have the following (in the second inequality we use the Markov inequality):  
  \[
    \mathbb{P}_\eta\left(\left. \mathcal{T}_t(x) < \frac{1-p}{4}t\right|\mathcal{F}_{k}\right) 
    \leq \mathbb{P}_\eta\left(\left.\int_{t/2}^{t}\mathbb{1}_{\{\eta_s(x)=0\}}\mathrm{d}s 
    < \frac{1-p}{4}t\right|\mathcal{F}_{k}\right)
  \]
  \[
    \leq \frac{\mathbb{E}_\eta\left(\left.\int_{t/2}^{t}
    \mathbb{1}_{\{\eta_s(x)=1\}}\mathrm{d}s \right|\mathcal{F}_{k}\right)}{\frac{t}{2}-\frac{1-p}{4}t}
     = \frac{\int_{t/2}^{t} \mathbb{P}_\eta (\eta_s(x)=1|\mathcal{F}_{k})\mathrm{d}s}
    {\left(1-\frac{1-p}{2}\right)\frac{t}{2}}.
  \]
  Furthermore, for $s \in [t/2,t]$, since $x \in H_k \cap E$, conditionally on $\mathcal{F}_k$ 
  we know that there was an update at $x$ before time $s$, but not the associated Bernoulli variable, hence 
  $\mathbb{P}_\eta (\eta_s(x)=1|\mathcal{F}_{k}) = p$. This implies 
  \[
    \mathbb{P}_\eta\left(\left. \mathcal{T}_t(x) < \frac{1-p}{4}t\right|\mathcal{F}_{k}\right) \leq 
    \frac{\int_{t/2}^{t} p \mathrm{d}s}{\left(1-\frac{1-p}{2}\right)\frac{t}{2}}
    = \frac{p}{1-\frac{1-p}{2}}.
  \]
  Moreover, $\frac{p}{1-\frac{1-p}{2}} = \frac{2p}{1+p} < 1$, 
  hence if we write $c_3' = -\ln(\frac{p}{1-\frac{1-p}{2}})$, we have $c_3' > 0$ and 
  $\mathbb{P}_\eta(\mathcal{T}_t(x) < \frac{1-p}{4}t|\mathcal{F}_{k}) \leq e^{-c_3'}$. 
  Consequently, (\ref{eq_lzi_sepsites}) yields 
  \[
    \mathbb{1}_{\mathcal{U}_k}\mathbb{E}_\eta \left(\mathbb{1}_{\mathcal{G}_{k}^c}|\mathcal{F}_{k}\right) \leq 
    \mathbb{1}_{\mathcal{U}_k}\prod_{x \in H_k \cap E} e^{-c_3'} = 
    \mathbb{1}_{\mathcal{U}_k}e^{-c_3'| H_k \cap E|}.
  \]
  Finally, $\mathcal{U}_k$ indicates that $H_k \cap E \neq 0$, thus 
  $\mathbb{1}_{\mathcal{U}_k}\mathbb{E}_\eta (\mathbb{1}_{\mathcal{G}_{k}^c}|\mathcal{F}_{k}) \leq 
  \mathbb{1}_{\mathcal{U}_k} e^{-c_3'} \leq e^{-c_3'}$ with $c_3' > 0$ depending only on $p$, 
  which is (\ref{eq_lzi_principale}).
\end{proof}

\begin{proof}[Proof of Lemma \ref{lemme_chemins_orientes}.]
  Let us suppose that no site of $D$ stays at zero during the time interval $[0,t/2]$.
  Then $E$ contains $x$, because $x \in D$ and if there was no update at $x$ between time 0 and 
  time $t/2$, the spin of $x$ would stay during this whole time interval at its initial state of 0, 
  which does not happen by assumption.
  We are going to show that if we have an oriented path in $E$ starting from $x$ that does not reach 
  $D \setminus D'$, we can add a site at its end in a way we still have an oriented path in $E$. 
  This is enough, because from the path composed only of $x$ 
  we can do at most $d\lfloor \beta  t \rfloor$ steps before reaching $D \setminus D'$.
  Thus we consider an oriented path in $E$ starting from $x$ that does not reach $D \setminus D'$. 
  Let us call $y$ its last site; we have $y \in D'$. 
  Since $y \in E$, $y$ was updated between time 0 and time 
  $t/2$. This implies that one of the $y-e_i$, $i \in \{1,\dots,d\}$, that we may call 
  $y'$, was at zero at the moment of the update.  Moreover, $y \in D'$, hence $y' \in D$.
  There are two possibilities: 
  \begin{itemize}
  \item Either the spin of $y'$ was not zero in the initial configuration. Then there was an 
  update at $y'$ before the update at $y$, hence before time $t/2$, so 
  since $y' \in D$, $y' \in E$.
  \item Or the spin at $y'$ was zero in the initial configuration. In this case, if there was no 
  update at $y'$ before time $t/2$, $y'$ stayed at 0 during the whole time interval 
  $[0,t/2]$. However $y' \in D$, 
  so this is impossible by assumption. Therefore there was an update 
  at $y'$ before time $t/2$, which implies $y' \in E$.
  \end{itemize}
  Therefore $y' \in E$ in both cases, which allows to add a site to the 
  path and ends the proof of Lemma \ref{lemme_chemins_orientes}.
\end{proof}

\subsection{Proving the origin stays at zero for a time $\Omega(t)$}\label{subsection_origine_azero}

In this section, we will use Proposition \ref{lemme_zero_initial} to prove the following result: 

\begin{lemma}\label{lemme_FK}
  There exist constants $\delta = \delta(p) \in ]0,1[$, $\alpha = \alpha(p) > 0$, $c_4 = c_4(p) > 0$ and 
  $C_4 = C_4(p) > 0$ such that for any $t \geq 0$, for any 
  $\eta \in \{0,1\}^{\mathbb{Z}^d}$ such that there exists 
  $x \in \{-\lfloor \alpha t\rfloor,\dots,0\}^d$ with 
  $\eta(x)=0$, $\mathbb{P}_\eta(\mathcal{T}_t(0) \leq \frac{1-p}{4}\delta^d t) \leq C_4 e^{-c_4 t}$.
\end{lemma}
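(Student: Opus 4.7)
\emph{Proof plan.} The strategy is to combine Proposition \ref{lemme_zero_initial}, which already provides a site in $D = \{-\lfloor 2d\alpha t\rfloor,\dots,0\}^d$ that stays at zero for a total time of at least $\frac{1-p}{4}t$, with a propagation argument from \cite{Chleboun_et_al2015} that transports this zero-time bound from that interior site to the origin, at the cost of a constant factor $\delta^d$.

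First I would fix $\alpha = \alpha(p) > 0$ to be calibrated later and apply Proposition \ref{lemme_zero_initial}: off an event of probability at most $C_3 e^{-c_3 t}$, there is a random site $X \in D$ with $\mathcal{T}_t(X) \geq \frac{1-p}{4}t$. Enumerating the $O(t^d)$ candidates for $X$ yields only a polynomial prefactor in the union bound, which is absorbed by the exponential. It is therefore enough to prove that, on the event $\{\mathcal{T}_t(x) \geq \frac{1-p}{4}t\}$ for any fixed $x \in D$, one has $\mathcal{T}_t(0) \geq \frac{1-p}{4}\delta^d t$ off an event of probability exponentially small in $t$.

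The key step is the propagation from $x$ to $0$. Exploiting the orientation of the East dynamics --- the spin at $y$ is driven only by sites in $y + (-\mathbb{N})^d$, and its constraint is satisfied as soon as some $y - e_i$ is zero --- I would invoke a result of \cite{Chleboun_et_al2015} coordinate by coordinate. Given $y \in D$ with $\mathcal{T}_t(y) \geq \tau = \Omega(t)$ and an index $i$, the dynamics of the column $\{y + k e_i : 1 \leq k \leq |y_i|\}$ is driven by a base site ($y$) that is zero a positive fraction of the time, and this is enough for the result of \cite{Chleboun_et_al2015} to guarantee that the site $y'$ defined by $y'_i = 0$ and $y'_j = y_j$ for $j \neq i$ satisfies $\mathcal{T}_t(y') \geq \delta \tau$ off an event with exponentially small probability, for a constant $\delta = \delta(p) \in\,]0,1[$ encoding the equilibrium density $1-p$ together with the mixing loss of the column. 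Applying this sequentially $d$ times, once per coordinate, raises all coordinates of $x$ up to $0$ while cumulatively losing only the factor $\delta^d$.

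The main obstacle --- and the reason the \cite{Chleboun_et_al2015} result is essential --- is ensuring that a single coordinate-wise step loses only the constant factor $\delta$ even when the coordinate gap $|y_i|$ is of order $t$. A naive site-by-site iteration along the column would produce a loss of order $\delta^{|y_i|}$, which is useless; the gain must come from treating the whole column at once via a column-level mixing estimate. Calibrating $\alpha$ small enough in terms of $p$ then keeps all exceptional probabilities (those of Proposition \ref{lemme_zero_initial}, of the $d$ propagation steps, and of the polynomial enumeration over candidates $x$) jointly exponentially small in $t$, yielding constants $c_4, C_4$ depending only on $p$.
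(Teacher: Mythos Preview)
Your proposal is correct and follows essentially the same route as the paper: apply Proposition~\ref{lemme_zero_initial}, take a union bound over the $O(t^d)$ candidate sites in $D$, and then propagate the zero-time lower bound to the origin one coordinate at a time via Lemma~4.9 of \cite{Chleboun_et_al2015}, losing a factor $\delta$ per step. The one quantitative detail you leave implicit is that the exceptional probability in that lemma carries a prefactor $(p\wedge(1-p))^{-|y_i|}$, which grows like $e^{c'\alpha t}$ since $|y_i|\le 2d\alpha t$; this is precisely the competition that forces $\alpha$ to be chosen small enough (the paper takes $\alpha=\frac{c(1-p)\delta^{d-1}}{-16d\ln(p\wedge(1-p))}$).
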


\begin{proof}
  Let $t \geq 0$. Thanks to Proposition \ref{lemme_zero_initial}, 
  for any $\alpha > 0$ and $\eta \in \{0,1\}^{\mathbb{Z}^d}$ such that there exists 
  $x \in \{-\lfloor \alpha t\rfloor,\dots,0\}^d$ with $\eta(x)=0$, we have 
  $\mathbb{P}_\eta(\mathcal{G}^c) \leq C_3 e^{-c_3 t}$ 
  with $c_3, C_3 > 0$ depending only on $p$ and $\alpha$. Therefore, it is enough to find 
  $\delta = \delta(p) \in ]0,1[$, $\alpha = \alpha(p) > 0$, 
  $C_4' = C_4'(p) > 0$ and $c_4' = c_4'(p) > 0$ depending only on $p$ such 
  that for $\eta \in \{0,1\}^{\mathbb{Z}^d}$ we have 
  $\mathbb{P}_\eta(\mathcal{G},\mathcal{T}_t(0) \leq \frac{1-p}{4}\delta^d t) 
  \leq C_4' e^{-c_4' t}$.

  Moreover, for any $\delta \in ]0,1[$, $\alpha > 0$ and $\eta \in \{0,1\}^{\mathbb{Z}^d}$, we have 
  \begin{equation}\label{eq_lFK_sepsites}
    \mathbb{P}_\eta\left(\mathcal{G},\mathcal{T}_t(0) \leq \frac{1-p}{4}\delta^d t\right) 
    \leq \sum_{y \in D} 
    \mathbb{P}_\eta\left(\mathcal{T}_t(y)\geq\frac{1-p}{4}t,
    \mathcal{T}_t(0) \leq \frac{1-p}{4}\delta^d t\right).
  \end{equation}
  For $y = (y_1,\dots,y_d) \in D$, we define the following 
  sequence of sites: $y^{(0)}=y,y^{(1)} = (0,y_2,\dots,y_d), 
  y^{(2)} = (0,0,y_3,\dots,y_d),\dots,y^{(d)} = (0,\dots,0)$. We then have 
  \begin{equation}\label{eq_lFK_sepdim}
    \begin{split}
      \mathbb{P}_\eta\left(\mathcal{T}_t(y)\geq\frac{1-p}{4}t,\mathcal{T}_t(0) \leq \frac{1-p}{4}\delta^d t\right)\qquad\qquad \\
      \leq \sum_{i=1}^d \mathbb{P}_\eta\left(\mathcal{T}_t(y^{(i-1)})\geq\delta^{i-1}\frac{1-p}{4}t,
      \mathcal{T}_t(y^{(i)}) \leq \frac{1-p}{4}\delta^i t\right).
    \end{split}
  \end{equation}

  To deal with this expression, we are going to use Lemma 4.9 of \cite{Chleboun_et_al2015}.
  This lemma yields that there exist constants $\delta \in ]0,1[$ and $c > 0$ depending only on $p$ 
  such that for any $i \in \{1,\dots,d\}$, defining $I_i = \{(0,\dots,j,y_{i+1},\dots,y_d) | 
  j \in \{y_{i}+1,\dots,0\}\}$ if $y_i \neq 0$ and $I_i = \emptyset$ if $y_i = 0$, 
  \[
    \mathbb{P}_\eta (\mathcal{T}_t(y^{(i)}) \leq \delta \mathcal{T}_t(y^{(i-1)})|\mathcal{F}_{t,I_i^c}) 
    \leq \frac{1}{(p \wedge (1-p))^{|y_i|}} e^{-c\mathcal{T}_t(y^{(i-1)})}.
  \]
  (Actually, this lemma was proven for a dynamics in $\mathbb{N}^d$, but the proof works in 
  $\mathbb{Z}^d$ with only minor modifications.)

  Therefore we can set $\delta$ to the value given by \cite{Chleboun_et_al2015}, and obtain the following 
  (in the first inequality we use that $\mathcal{T}_t(y^{(i-1)})$ is 
  $\mathcal{F}_{t,y^{(i-1)}+(-\mathbb{N})^d}$-measurable, hence $\mathcal{F}_{t,I_i^c}$-measurable): 
  \[
    \mathbb{P}_\eta\left(\mathcal{T}_t(y^{(i-1)})\geq\delta^{i-1}\frac{1-p}{4}t,
    \mathcal{T}_t(y^{(i)}) \leq \frac{1-p}{4}\delta^i t\right) 
  \]
  \[
    \leq \mathbb{E}_\eta(\mathbb{1}_{\{\mathcal{T}_t(y^{(i-1)})\geq\delta^{i-1}\frac{1-p}{4}t\}}
    \mathbb{P}_\eta (\mathcal{T}_t(y^{(i)}) \leq \delta \mathcal{T}_t(y^{(i-1)})|\mathcal{F}_{t,I_i^c})) 
  \]
  \[
    \leq \mathbb{E}_\eta\left(\mathbb{1}_{\{\mathcal{T}_t(y^{(i-1)})\geq\delta^{i-1}\frac{1-p}{4}t\}}
    \frac{1}{(p \wedge (1-p))^{|y_i|}} e^{-c\mathcal{T}_t(y^{(i-1)})} \right) 
    \leq \frac{1}{(p \wedge (1-p))^{|y_i|}} e^{-c\delta^{i-1}\frac{1-p}{4}t}. 
  \]
  Moreover, since $y \in D$, $|y_i| \leq \lfloor 2d\alpha t \rfloor \leq 2d\alpha t$, so 
  if we set $\alpha = \frac{c(1-p)\delta^{d-1}}{-16d\ln(p\wedge(1-p))}$ 
  (which is positive and depends only on $p$), we obtain 
  $(p \wedge (1-p))^{|y_i|} \geq  e^{-\frac{c(1-p)\delta^{d-1}}{8}t}$, hence the last term in the display is bounded 
  by $e^{-\frac{c(1-p)\delta^{d-1}}{8}t}$. Therefore, by (\ref{eq_lFK_sepdim}), 
  \[
    \mathbb{P}_\eta\left(\mathcal{T}_t(y)\geq\frac{1-p}{4}t,
    \mathcal{T}_t(0) \leq \frac{1-p}{4}\delta^d t\right) 
    \leq d e^{-\frac{c(1-p)\delta^{d-1}}{8}t},
  \]
  so by (\ref{eq_lFK_sepsites}) 
  \[
    \mathbb{P}_\eta\left(\mathcal{G},\mathcal{T}_t(0) \leq \frac{1-p}{4}\delta^d t\right) 
    \leq |D| d e^{-\frac{c(1-p)\delta^{d-1}}{8}t} 
    = (\lfloor 2d\alpha t \rfloor+1)^d d e^{-\frac{c(1-p)\delta^{d-1}}{8}t}
  \]
  with $\frac{c(1-p)\delta^{d-1}}{8} > 0$ depending only on $p$ and $\alpha$ 
  depending only on $p$, so we get a suitable bound on 
  $\mathbb{P}_\eta(\mathcal{G},\mathcal{T}_t(0) \leq \frac{1-p}{4}\delta^d t)$.
\end{proof}

\subsection{Ending the proof of Theorem \ref{thm_convergence}}\label{subsection_preuve_thm}

Let $\nu$ a measure on $\{0,1\}^{\mathbb{Z}^d}$ satisfying $(\mathcal{C})$, 
$t \geq 0$ and $f : \{0,1\}^{\mathbb{Z}^d} \mapsto \mathbb{R}$ non constant 
with $\|f\|_\infty < \infty$. We denote $\mathcal{N}(\eta) = \{\exists x \in 
\{-\lfloor \alpha t \rfloor,\dots,0\}^d,\eta(x)=0\}$, where $\alpha = \alpha(p) > 0$ is 
given by Lemma \ref{lemme_FK}. We also denote $g = \frac{f - \mu(f)}{\|f - \mu(f)\|_\infty}$. Then 
\[
  \int_{\{0,1\}^{\mathbb{Z}^d}}\left| \mathbb{E}_\eta(f(\eta_t)) - \mu(f) \right| 
  \mathrm{d}\nu(\eta) = \|f - \mu(f)\|_\infty \int_{\{0,1\}^{\mathbb{Z}^d}}\left| \mathbb{E}_\eta(g(\eta_t)) \right| 
  \mathrm{d}\nu(\eta)
\]
\[
  \leq 2 \|f\|_\infty \left( \int_{\{0,1\}^{\mathbb{Z}^d}}|\mathbb{E}_\eta(g(\eta_t))|
  \mathbb{1}_{\mathcal{N}(\eta)^c}\mathrm{d}\nu(\eta) 
  + \int_{\{0,1\}^{\mathbb{Z}^d}}| \mathbb{E}_\eta(g(\eta_t))|
  \mathbb{1}_{\mathcal{N}(\eta)}\mathrm{d}\nu(\eta) \right).
\]
Moreover, since $\|\mu(g)\|_\infty = 1$ and  $\nu$ satisfies $(\mathcal{C})$, we can see that we have 
the following: $\int_{\{0,1\}^{\mathbb{Z}^d}}|\mathbb{E}_\eta(g(\eta_t))| 
\mathbb{1}_{\mathcal{N}(\eta)^c}\mathrm{d}\nu(\eta) \leq \nu(\mathcal{N}(\eta)^c) \leq A e^{-a \alpha t}$ 
with $A,a > 0$ depending only on $\nu$. 
  
Therefore, to prove Theorem \ref{thm_convergence}, it is enough to find $\chi > 0$ depending only on $p$ 
such that for any $f : \{0,1\}^{\mathbb{Z}^d} \mapsto \mathbb{R}$ non constant 
(if $f$ is constant the theorem is trivially true) with support in $\Lambda(\chi t^{1/d})$ 
(which automatically gives $\|f\|_\infty < \infty$) and any $\eta \in \{0,1\}^{\mathbb{Z}^d}$ 
such that $\mathcal{N}(\eta)$, $| \mathbb{E}_\eta(g(\eta_t))| \leq C_1' e^{-c_1' t}$ 
with $C_1', c_1' >0$ depending only on $p$. For $\chi > 0$, we set such $f$ and $\eta$. 
Since $\|g\|_\infty = 1$, for $\delta$ as in Lemma \ref{lemme_FK} we have 
\[
  \left| \mathbb{E}_\eta(g(\eta_t)) \right| 
  \leq \mathbb{P}_\eta\left(\mathcal{T}_t(0) \leq \frac{1-p}{4}\delta^d t\right) 
  + \left|\mathbb{E}_\eta\left(\mathbb{1}_{\{\mathcal{T}_t(0) > \frac{1-p}{4}\delta^d t\}}
  g(\eta_t)\right)\right|.
\]
In addition, since there is $x \in \{-\lfloor \alpha t\rfloor,\dots,0\}^d$ such that $\eta(x)=0$, by Lemma 
\ref{lemme_FK} we have $\mathbb{P}_\eta(\mathcal{T}_t(0) \leq \frac{1-p}{4}\delta^d t) 
\leq C_4e^{-c_4 t}$ with $C_4,c_4 > 0$ depending only on $p$. Consequently, it is enough to bound 
$|\mathbb{E}_\eta(\mathbb{1}_{\{\mathcal{T}_t(0) > \frac{1-p}{4}\delta^d t\}} g(\eta_t))|$.

Writing $\Lambda = \Lambda(\chi t^{1/d})$ for short, 
we notice that the event $\{\mathcal{T}_t(0) > \frac{1-p}{4}\delta^d t\}$ is 
$\mathcal{F}_{t,(-\mathbb{N})^d}$-measurable hence $\mathcal{F}_{t,\Lambda^c}$-measurable, 
which implies 
\[
  \left|\mathbb{E}_\eta\left(\mathbb{1}_{\{\mathcal{T}_t(0) > \frac{1-p}{4}\delta^d t\}}
  g(\eta_t)\right)\right| 
  = \left|\mathbb{E}_\eta\left(\mathbb{1}_{\{\mathcal{T}_t(0) > \frac{1-p}{4}\delta^d t\}}
  \mathbb{E}_\eta\left(g(\eta_t)|\mathcal{F}_{t,\Lambda^c}\right)\right)\right|,
\]
therefore 
\[
  \left|\mathbb{E}_\eta\left(\mathbb{1}_{\{\mathcal{T}_t(0) > \frac{1-p}{4}\delta^d t\}}
  g(\eta_t)\right)\right| 
\]
\[
  \leq \frac{1}{\min_{\sigma \in \{0,1\}^\Lambda}\mu(\sigma)} 
  \mathbb{E}_\eta \left(\mathbb{1}_{\{\mathcal{T}_t(0) > \frac{1-p}{4}\delta^d t\}}
  \sum_{\sigma \in \{0,1\}^\Lambda}\mu(\sigma)
  \mathbb{E}_{\sigma\cdot\eta}\left(g(\eta_t)|\mathcal{F}_{t,\Lambda^c}\right)\right),
\]
where $\sigma\cdot\eta$ is the configuration equal to $\sigma$ in $\Lambda$ and 
to $\eta$ in $\Lambda^c$. Furthermore, the reasoning of Equation (4.2) 
of \cite{Chleboun_et_al2015} and of the paragraphs around it yields that 
\[
  \sum_{\sigma \in \{0,1\}^\Lambda}\mu(\sigma)
  \mathbb{E}_{\sigma\cdot\eta}\left(g(\eta_t)|\mathcal{F}_{t,\Lambda^c}\right) 
  \leq e^{-\lambda \mathcal{T}_t(0)}
\]
where $\lambda$ is the \emph{spectral gap} of the East dynamics in $\Lambda$ 
where the spin of the origin is fixed at 0 and the other spins outside $\Lambda$ 
are at 1 (see Chapter 2 of \cite{Guionnet_et_al2002} for the definition of the spectral gap 
and Section 2.4 of \cite{Cancrini_et_al2008} for an introduction to the spectral gap in the particular 
context of kinetically constrained models). 
Moreover, one can use the argument of Section 6.2.2 of \cite{Chleboun_et_al_2014} 
on our $\Lambda$ instead of on a cube to obtain that 
$\lambda$ is bigger than the spectral gap $\lambda'$ of the one-dimensional East dynamics in 
$\{1,\dots,d\lfloor\chi t^{1/d}\rfloor\}$ with the origin fixed at zero.
To do that, one can use a forest instead of a tree and apply the fact 
that the spectral gap of a product dynamics is the 
minimum of the spectral gaps of the component dynamics 
(Theorem 2.5 of \cite{Guionnet_et_al2002}). Furthermore, Equation (3.3) of 
\cite{Chleboun_et_al_2014} yields that $\lambda'$ is bigger than the spectral gap $\lambda''$ of the East
dynamics in $\mathbb{Z}$, which depends only on $p$ and is positive by Theorem 6.1 of 
\cite{Cancrini_et_al2008}. 

Consequently, we have 
\[
  \left|\mathbb{E}_\eta\left(\mathbb{1}_{\{\mathcal{T}_t(0) > \frac{1-p}{4}\delta^d t\}}g(\eta_t)\right)\right| 
  \leq \frac{1}{\min_{\sigma \in \{0,1\}^\Lambda}\mu(\sigma)} 
  \mathbb{E}_\eta \left(\mathbb{1}_{\{\mathcal{T}_t(0) > \frac{1-p}{4}\delta^d t\}}
  e^{-\lambda'' \mathcal{T}_t(0)}\right) 
\]
\[
  \leq \frac{1}{(p \wedge (1-p))^{|\Lambda|}}e^{-\lambda'' \frac{1-p}{4}\delta^d t}.
\]
Moreover, $|\Lambda| \leq (\chi t^{1/d}+1)^d$ and we can suppose $\chi t^{1/d} \geq 1$, since if 
$\chi t^{1/d} < 1$, $|\Lambda|$ is empty and there is no non constant function with support in $\Lambda$. 
Therefore we get $|\Lambda| \leq (2\chi t^{1/d})^d = 2^d \chi^d t$.
Now, if we set $\chi = \frac{1}{2}(\frac{\lambda''(1-p)\delta^d}{-8\ln(p\wedge(1-p))})^{1/d}$, $\chi$ is positive 
and depends only on $p$, and we have $(p \wedge (1-p))^{|\Lambda|} \geq e^{-\frac{\lambda''(1-p)\delta^d}{8}t}$, 
thus 
\[
  \left|\mathbb{E}_\eta\left(\mathbb{1}_{\{\mathcal{T}_t(0) > \frac{1-p}{4}\delta^d t\}}
  g(\eta_t)\right)\right| 
  \leq e^{-\frac{\lambda''(1-p)\delta^d}{8}t} 
\]
with $\frac{\lambda''(1-p)\delta^d}{8}$ positive depending only on $p$, which ends the proof of 
Theorem \ref{thm_convergence}.

\section{Proof of Corollary \ref{cor_persistance}}\label{sec_preuve_cor}

This proof is inspired from the proof of Lemma A.3 of \cite{Chleboun_et_al_2014}.

Let $\nu$ a measure on $\{0,1\}^{\mathbb{Z}^d}$ satisfying $(\mathcal{C})$, 
$\chi$ as in Theorem \ref{thm_convergence}, $t \geq 0$, $x \in \Lambda(\chi t^{1/d})$. 
For any $\eta \in \{0,1\}^{\mathbb{Z}^d}$, we have 
\[
  \mathbb{E}_\eta(\eta_t(x))=\mathbb{E}_\eta(\eta_t(x)|\tau_x \leq t)\mathbb{P}_\eta(\tau_x \leq t) 
  + \mathbb{E}_\eta(\eta_t(x)|\tau_x > t)\mathbb{P}_\eta(\tau_x > t) 
\]
\[
  =p\mathbb{P}_\eta(\tau_x \leq t) + \eta(x)\mathbb{P}_\eta(\tau_x > t) 
  = p- p\mathbb{P}_\eta(\tau_x > t)+ \eta(x)\mathbb{P}_\eta(\tau_x > t) 
\]
since if $\tau_x \leq t$, $\eta_t(x)$ is a Bernoulli random variable of parameter $p$. Therefore, 
\[
  |\mathbb{E}_\eta(\eta_t(x))-p| = |\eta(x)-p|\mathbb{P}_\eta(\tau_x > t) 
  \geq (p \wedge (1-p))\mathbb{P}_\eta(\tau_x > t),
\]
and we deduce 
\[
  F_{\nu,x}(t) = \mathbb{P}_\nu(\tau_x > t) 
  = \int_{\{0,1\}^{\mathbb{Z}^d}}\mathbb{P}_\eta(\tau_x > t)\mathrm{d}\nu(\eta) 
\]
\[
  \leq \frac{1}{p \wedge (1-p)}\int_{\{0,1\}^{\mathbb{Z}^d}}|\mathbb{E}_\eta(\eta_t(x))-p|\mathrm{d}\nu(\eta)
  \leq \frac{1}{p \wedge (1-p)} C_1 e^{-c_1 t}
\]
by Theorem \ref{thm_convergence} with $C_1 > 0$ and $c_1 > 0$ depending only on $p$ and $\nu$.

%%%%%%%%%%%%%%%%%%%%%%%%%%%%%%%%%%%%%%%%%%%%%%%%%%%%%%%%%%%%%%%%%%%
%%                                                               %%
%% Use the two commands below for producing your bibliography    %%
%% with bibtex, then comment again the commands and include the  %%
%% content of the .bbl file in this file below the commands.     %%
%%                                                               %%
%%%%%%%%%%%%%%%%%%%%%%%%%%%%%%%%%%%%%%%%%%%%%%%%%%%%%%%%%%%%%%%%%%%

%\bibliographystyle{amsplain}
%\bibliography{yourbibfilename}

% add below the content of your .bbl file produced by bibtex.

%%%%%%%%%%%%%%%%%%%%%%%%%%%%%%%%%%%%%%%%%%%%%%%%%%%%%%%%%%%%%%%%%%%
%%                                                               %%
%% You may add acknowledgments (optional).                       %%
%%                                                               %%
%%%%%%%%%%%%%%%%%%%%%%%%%%%%%%%%%%%%%%%%%%%%%%%%%%%%%%%%%%%%%%%%%%%

\ACKNO{I would like to thank my PhD advisor, Cristina Toninelli.}

%%%%%%%%%%%%%%%%%%%%%%%%%%%%%%%%%%%%%%%%%%%%%%%%%%%%%%%%%%%%%%%%%%%
%%                                                               %%
%% You have reached the end of your document.                    %%
%%                                                               %%
%%%%%%%%%%%%%%%%%%%%%%%%%%%%%%%%%%%%%%%%%%%%%%%%%%%%%%%%%%%%%%%%%%%

\end{document}